\theoremstyle{plain}
\newtheorem{The}{Theorem}
\newtheorem*{The*}{Theorem}
\newtheorem{Lem}{Lemma}
\newtheorem*{Cor*}{Corollary}
\theoremstyle{definition}
\newtheorem{Rem}{Remark}
\newtheorem*{Rem*}{Remark}
\numberwithin{equation}{section}
\renewcommand{\Im}{\operatorname{Im}}
\renewcommand{\Re}{\operatorname{Re}}
\DeclareMathOperator{\del}{\partial}
\newcommand{\R}{\mathbb{R}}
\newcommand{\C}{\mathbb{C}}
\newcommand{\N}{\mathbb{N}}
\newcommand{\Z}{\mathbb{Z}}
\renewcommand{\H}{\mathbb{H}}
\begin{document}

\title{Stability properties of $2$-lobed Delaunay tori in the $3$-sphere }

\author{Lynn Heller}

\address{ Institut f\"ur Differentialgeometrie\\  Leibniz Universit{\"a}t Hannover\\ Welfengarten 1, 30167 Hannover}

 \email{lynn.heller@math.uni-hannover.de}

 \author{Sebastian Heller}
\address{Department of Mathematics\\
University of Hamburg\\
20146 Hamburg, Germany
 }
 \email{seb.heller@gmail.com}

\author{Cheikh Birahim Ndiaye}
\address{Department of Mathematics of Howard University\\
204 Academic Support Building B
Washington, DC 20059k \\ USA
 }
 \email{cheikh.ndiaye@howard.edu}


 \date{\today}

\noindent

\begin{abstract} 
We show that the of $2$-lobed Delaunay tori are stable as constrained Willmore surfaces in the $3$-sphere.

\begin{center}
\vspace{0.5cm}
\includegraphics[width= 0.19\textwidth]{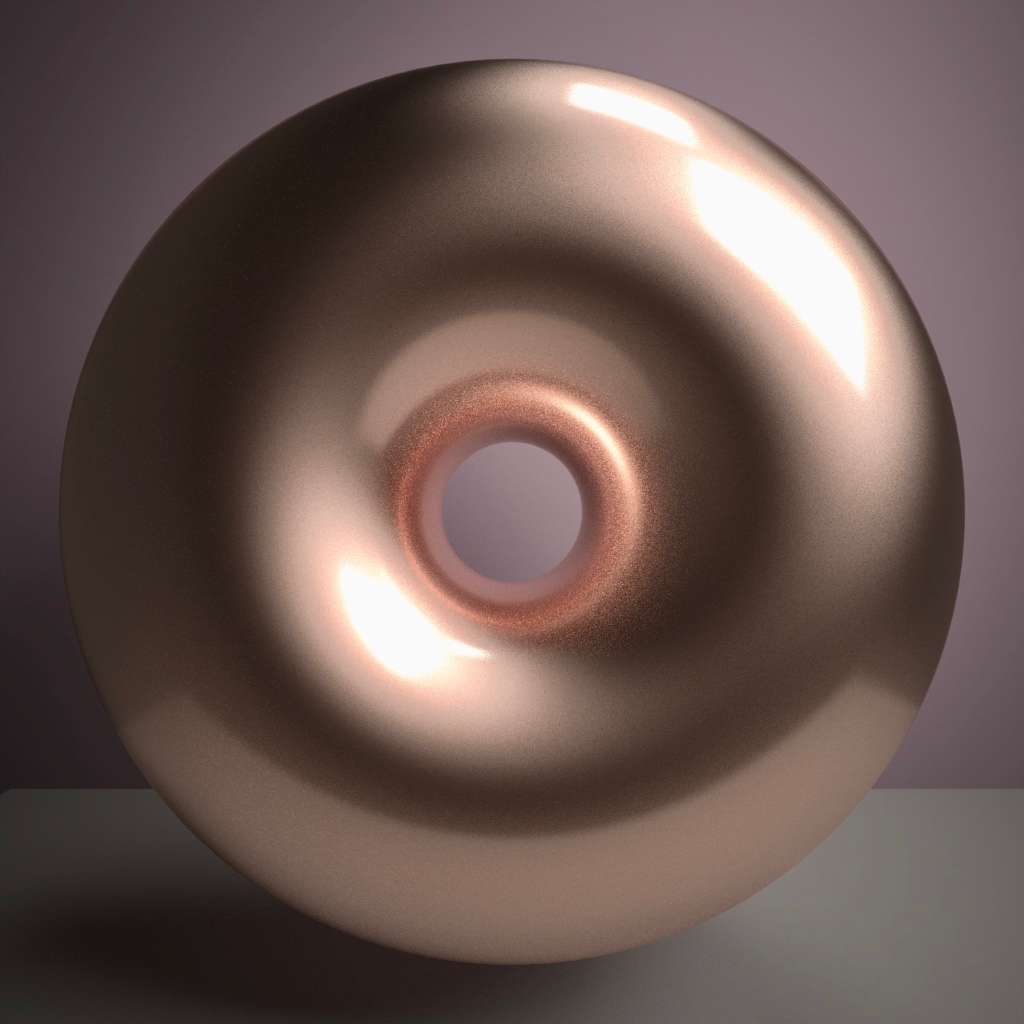}\hspace{0.2cm}
\includegraphics[width= 0.19\textwidth]{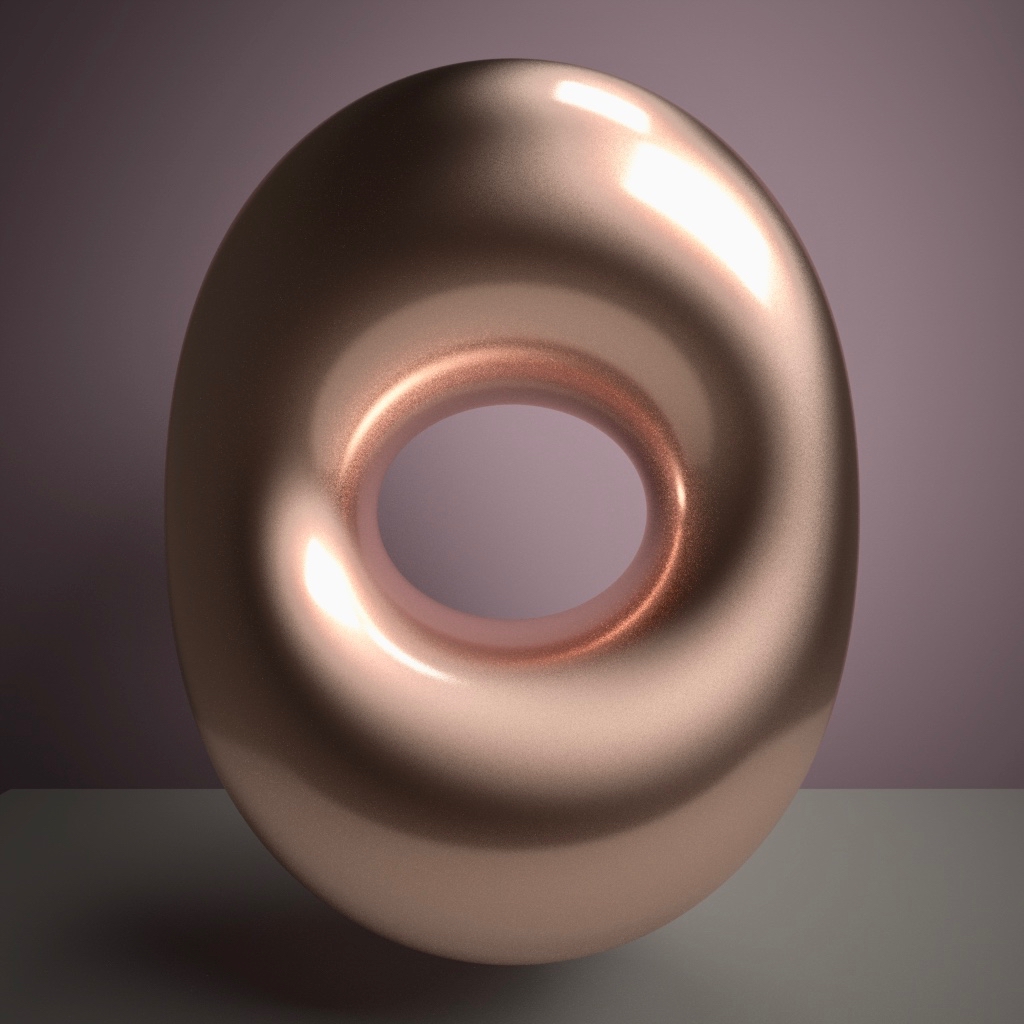}\hspace{0.2cm}
\includegraphics[width= 0.19\textwidth]{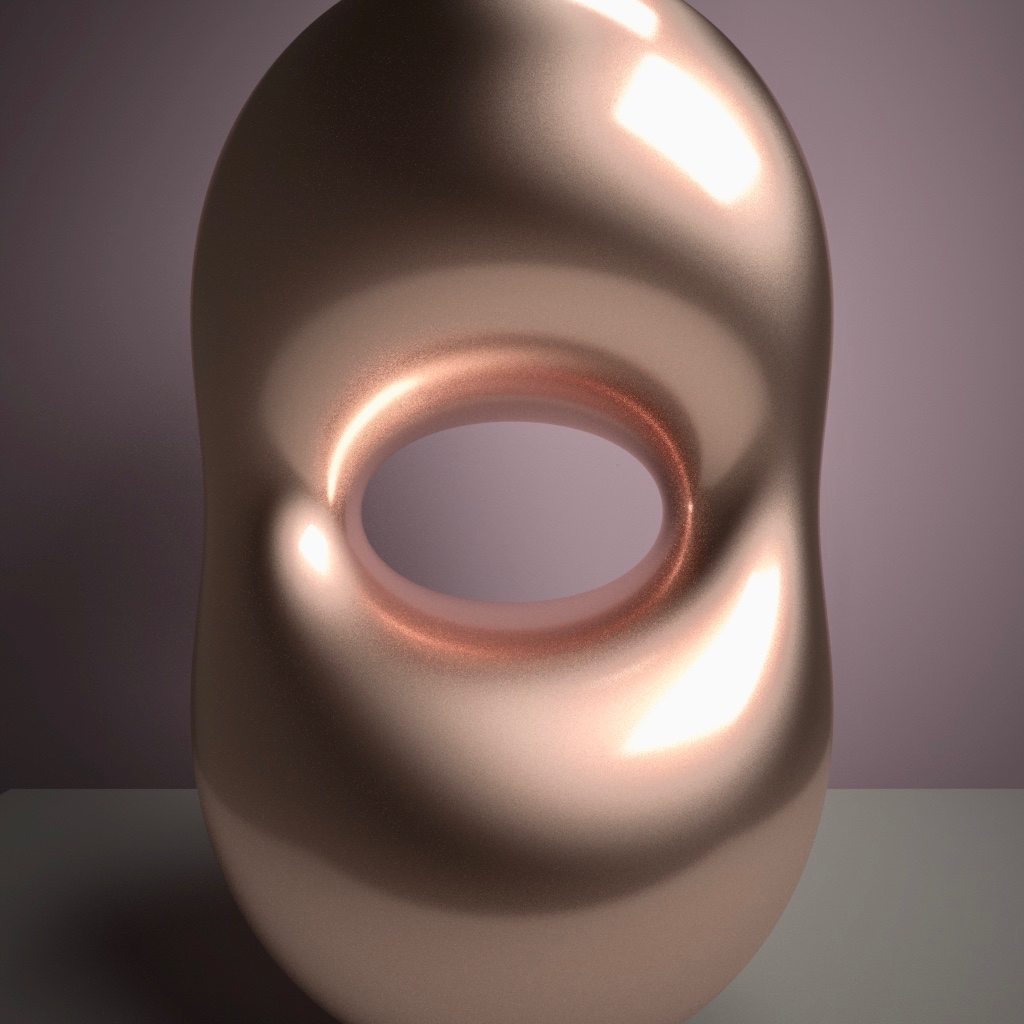}\hspace{0.2cm}
\includegraphics[width= 0.19\textwidth]{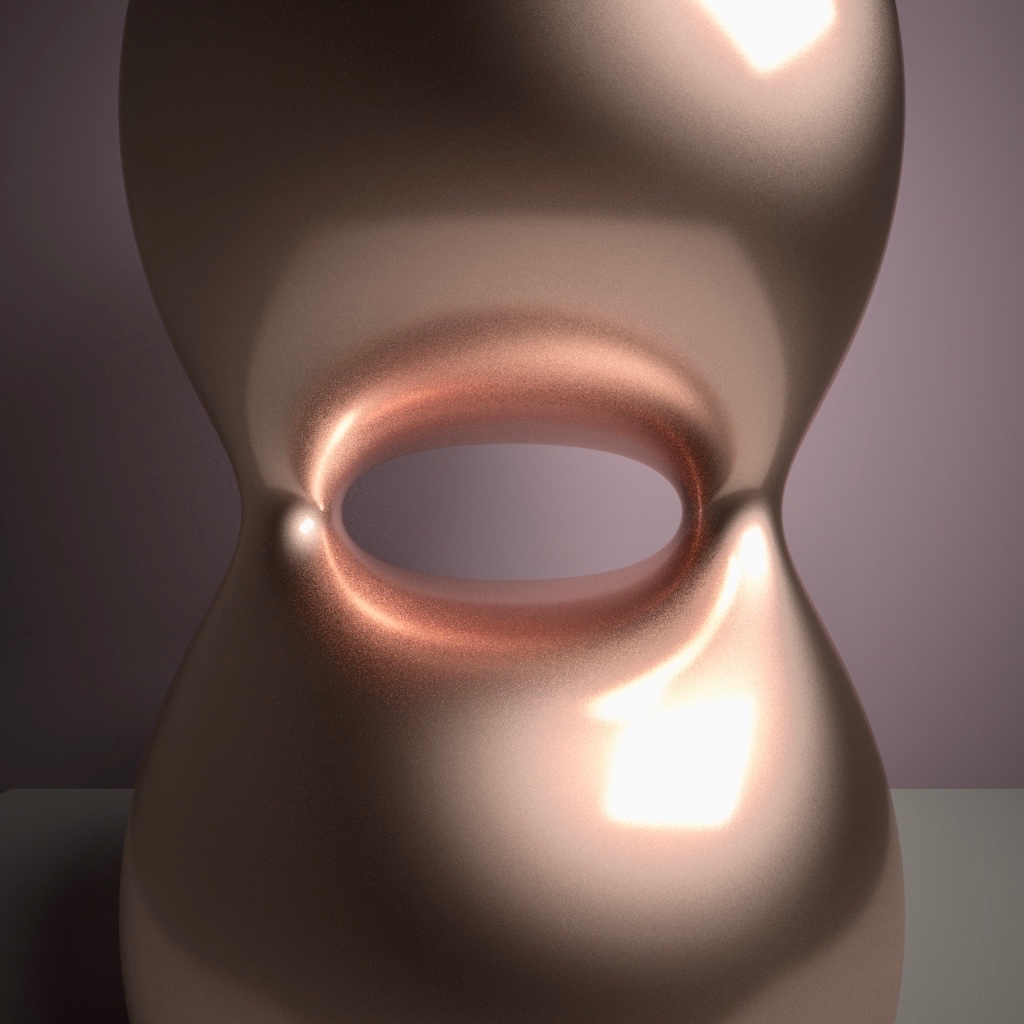}
\vspace{-0.6cm}
\end{center}
 \end{abstract}

\maketitle



\section{Introduction}
We study conformal immersions $f\colon M\to S^3$ from a Riemann surface $M$ into the $3$-sphere that are critical points of the Willmore energy
\[
\mathcal{W}(f)=\int_M (H^2+1) dA
\] 
under conformal variations. Here we denote by $H$ is the  mean curvature and $dA$ is the induced area form of $f$. Geometrically speaking $\mathcal W$ measures the roundness of a surface, physically  the degree of bending, and in biology $\mathcal W$ appears as a special instance of the Helfrich energy for cell membranes.  The conformal constraint augments the Euler-Lagrange equation by a holomorphic quadratic differential \linebreak $\omega\in H^0(K^2_M)$ paired with the trace-free second fundamental  form $\mathring{A}$ of the immersion
\[
\triangle H+ 2H(H^2+1-K)=\,<\omega,\mathring{A}>,
\]
see \cite{BPP}.
The first examples of constrained Willmore surfaces are given by surfaces of constant mean curvature in a $3$-dimensional space form. In this case the critical surface is isothermic: the holomorphic quadratic differential is no longer uniquely determined by the immersion leading to a singularity of the moduli space.  Since there are no holomorphic quadratic differentials on a genus zero Riemann surface, constrained Willmore spheres are the same as Willmore spheres. For  genus $g\geq1$ surfaces this is no longer the case: constant mean curvature (CMC) surfaces (and their M\"obius transforms) are constrained Willmore, as one can see by choosing $\omega=\mathring{A}$ to be the holomorphic Hopf differential, but not Willmore unless the surface is totally umbilic. In the case of $M = T^2$ being a torus  Bohle \cite{Bohle}, partially motivated by the manuscript of Schmidt \cite{Schmidt}, showed that all constrained Willmore tori arise from linear flows on Jacobians of finite genus spectral curves. Starting at the Clifford torus, which has mean curvature $H=0$ and a square conformal structure, these surfaces in the 3-sphere limit with monotone and unbounded mean curvature to a circle and thereby sweeping out  all rectangular conformal structures. Less trivial examples come from the Delaunay tori of various lobe counts in the 3-sphere whose spectral curves have genus 1 (see Figure~\ref{fig:torus-tree}). 
By the solution of the Lawson and the Pinkall-Sterling conjecture, due to Brendle \cite{Brendle} and  Andrews \& Li \cite{AndrewsLi} using Brendle's approach, 
those are the only embedded CMC tori in the 3-sphere.
 \begin{figure}
\centering
\includegraphics[width=0.375\textwidth
]
{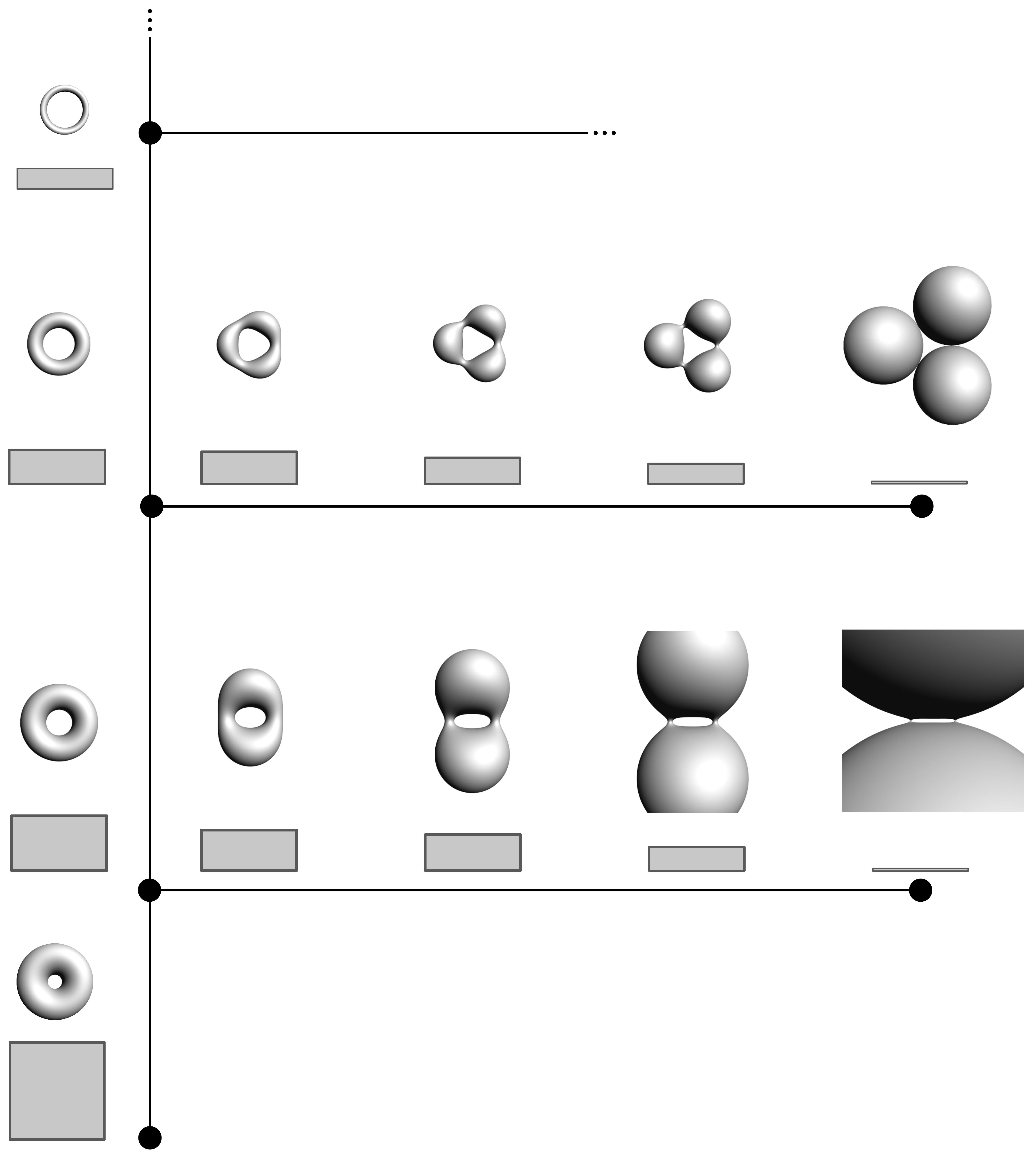}
\caption{
The vertical stalk represents the family of homogenous tori,
starting with the Clifford torus at the bottom.
Along this stalk are bifurcation points at which
the embedded Delaunay tori appear along the horizontal lines.
The rectangles indicate the conformal types. Images by Nicholas Schmitt.
}
\label{fig:torus-tree}
\end{figure}

Existence and regularity of a minimizer $f\colon T^2\longrightarrow S^3$  in a given conformal class for any genus was shown by Kuwert and Sch\"atzle \cite{KuwertSchaetzle} under the provision that the infimum Willmore energy $\mathcal{W}(f)$  is below $8\pi$. This restriction is used to rule out minimizers with branch points. Similar results were proven by  Riviere \cite{Riviere} using the divergence form of the Euler-Lagrange equation. Parallel to the solution of the Willmore conjecture Ndiaye and Sch\"atzle \cite{NdiayeSchaetzle1, NdiayeSchaetzle2} showed that for rectangular conformal classes $(0,b)$ in a neighborhood of the square conformal class $(0,1)$ the homogenous tori $f_H^b$ (whose spectral curves have genus 0) are the unique minimizers for the constrained Willmore problem. In a recent preprint \cite{HelNdi} minimizers of the Willmore energy with conformal class lying in a suitable neighborhood (of the Teichm\"uller space) of the square class have been identified to be equivariant. As a corollary we obtain the real analyticity of the minimal Willmore energy $\omega(a,b)$ for $b \sim 1, $ $b \neq 1$ and $a\sim0^+.$ Furthermore, we obtain in this region that the minimal energy is $\mathcal C^0$ but not $\mathcal C^1$ at rectangular conformal classes. In particular, using the same arguments as in \cite{NdiayeSchaetzle1} together with \cite[Corollary 6]{MontielRos}, the homogenous tori $f_H^b$ with $b \sim 1$ uniquely minimize the penalized Willmore energy
$$\mathcal W_{\alpha} := \mathcal W - \alpha \Pi^1$$
among immersions of conformal class $(a,b),$ with $a \leq \tfrac{1}{2}$, if $\alpha>0$ is small enough.

The homogeneous tori of revolution eventually have to fail to be minimizing since their Willmore energy can be made arbitrarily large. Calculating the 2nd variation of the Willmore energy $\mathcal{W}$ along tori of revolution with circular profiles Kuwert and Lorenz \cite{KuwertLorenz} showed that negative eigenvalues appear at those conformal classes $(0, b_{k-2})$ whose rectangles have side length ratio $\sqrt{k^2-1}$ for $k\geq 2$. These are exactly the rectangular conformal classes from which the $k$-lobed Delaunay tori (of spectral genus 1) bifurcate and the corresponding homogenous torus is of Index $2(k-2)$.  Any of the families $f_k^b$ starting from the Clifford torus, following homogenous tori to the $k$-th  bifurcation point, and branching to the $k$-lobed Delaunay tori which limit to a neckless of spheres, sweep out all rectangular conformal classes (see Figure~\ref{fig:torus-tree}). The Willmore energy varies strictly monotonically along the family  between $2\pi^2\leq \mathcal{W}<4\pi k$ as $b$ varies from $1$ to $\infty$, see \cite{KilianSchmidtSchmitt1, KilianSchmidtSchmitt2}.  For $k= 2$ we obtain an immersion with Willmore energy below $8\pi$ for every rectangular conformal class satisfying the energy bound in \cite{KuwertSchaetzle} showing the existence of an embedded minimizer in all these conformal classes. Thus it is conjectured that $f^b_2$ minimizes the Willmore energy for rectangular conformal classes $(0,b)$ in all codimensions.

In this paper we prove the necessary condition for the conjecture to hold in $3$-space:

\begin{The}\label{stability}
The family of $2$-lobed Delaunay tori $f^b$ are constrained-Willmore-stable for all $b \in \R_{\geq 1}$. Moreover, the kernel dimension of the stability operator is at most $1$ (up to invariance) for $b > b_0$ and reduces to a variation $\varphi$ of the underlying curve. In particular,
$$\delta^2 \Pi^1(f^b) (\varphi, \varphi) = 0.$$

\end{The}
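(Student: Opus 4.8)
The plan is to reduce the constrained-Willmore-stability statement to an explicit spectral problem for the second variation of the Willmore functional along the Delaunay family, exploiting the equivariance (and hence the separation-of-variables structure) of the $2$-lobed tori. First I would set up the space of admissible variations: since $f^b$ is a constrained Willmore torus which is isothermic (it is CMC in an appropriate space form), the constrained-Willmore-stability has to be checked on variations that preserve the conformal class to second order, modulo the Möbius invariance and the reparametrization invariance. I would use the classical fact that for equivariant immersions the Jacobi operator $L=\delta^2\mathcal W(f^b)$ commutes with the $S^1$-action, so it block-diagonalizes into Fourier modes $e^{in\theta}$ along the orbit direction; on each mode $L$ becomes an ordinary differential operator $L_n$ on the profile curve. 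The second step is to analyze $L_0$ (the equivariant modes): here one uses the known global structure of the Delaunay family — the monotonicity of the Willmore energy along $b\in[1,\infty)$ proved in \cite{KilianSchmidtSchmitt1, KilianSchmidtSchmitt2} together with the bifurcation picture of \cite{KuwertLorenz} — to conclude that the equivariant part of the second variation is nonnegative, with a one-dimensional kernel generated by the tangent vector $\varphi=\partial_b f^b$ to the family (plus the trivial invariances). This is where the identity $\delta^2\Pi^1(f^b)(\varphi,\varphi)=0$ enters: $\varphi$ moves within the family of CMC-in-a-space-form tori, all of which are constrained Willmore, so the constraint functional's second variation must vanish on it, and $\varphi$ lies in the kernel of the constrained stability operator precisely because the whole family is critical.

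The third and main step is to control the higher Fourier modes $L_n$ for $n\geq 1$. Here I would argue that for $n$ large the potential terms in $L_n$ are dominated by the positive $n^2$-term coming from the angular Laplacian, so $L_n>0$ for all $n\ge N_0$ with an explicit $N_0$; then one is left with finitely many modes $1\le n<N_0$ to check. For those, I would either (a) exploit the spectral-curve description of Bohle \cite{Bohle} — the Delaunay tori have spectral genus $1$, and the linearized deformation theory is governed by the Jacobian flow, which pins down the admissible (conformally-constrained) deformations and shows they carry no negative directions beyond the invariances — or (b) do a direct Sturm–Liouville analysis of each $L_n$ on the profile, using the explicit elliptic-function parametrization of the $2$-lobed profile curve and comparison with the homogeneous torus (whose index is $2(k-2)=0$ for $k=2$, which is exactly why $k=2$ is the borderline case and the whole theorem is plausible). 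The passage from Willmore-stability to \emph{constrained}-Willmore-stability is then handled by a standard orthogonality/Lagrange-multiplier argument: restricting a possibly-indefinite form to the conformal-constraint subspace and using that the constraint derivative is nondegenerate transverse to the isothermic locus.

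The main obstacle I anticipate is precisely the analysis of the low nonzero modes $L_n$, $1\le n<N_0$, and in particular ruling out a negative or extra zero eigenvalue on mode $n=1$ — the mode that typically governs translational/rotational-type instabilities and where the conformal constraint is most delicate. Establishing the sharp claim that the kernel is at most one-dimensional (up to invariance) for $b>b_0$ will require showing that the kernel of every $L_n$ with $n\ge1$ is spanned only by Möbius and reparametrization variations; I expect this to follow from a careful count using the spectral-curve/Jacobian picture, but making the dimension count rigorous — identifying $b_0$ as the threshold below which an additional Jacobi field can appear — is the technically hardest point, and it is where the explicit structure of the $2$-lobed Delaunay tori (as opposed to general $k$) must be used in an essential way.
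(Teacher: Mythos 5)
Your skeleton (separate variables along the $S^1$-direction, treat the equivariant mode and the higher Fourier modes separately) matches the paper's, but the two steps that carry the real weight are not established by your argument. First, the equivariant mode: you claim nonnegativity of $L_0$ along the whole branch from the monotonicity of $\mathcal{W}(f^b)$ and the Kuwert--Lorenz bifurcation picture. Bifurcation data at $b_0$ plus monotonicity of the energy do not control the sign of the second variation far along the branch (a bifurcating branch can lose stability again), and identifying the kernel with $\partial_b f^b$ is problematic because $\partial_b f^b$ changes the conformal class to first order and the multiplier $\beta^b$ varies with $b$, so it is not a Jacobi field of a fixed constrained functional without correction terms. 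The paper's actual argument here is a global minimization argument, which is the heart of the proof: for fixed length $L=2\pi b$ the minimizer $\gamma_{\min}$ of the elastic energy among closed curves in $\mathcal{H}^2$ exists (Langer--Singer); rotating it yields an embedded isothermic constrained Willmore torus with energy below $8\pi$ (using $f^b$ as competitor), hence CMC by \cite{HHNd1} and \cite{Richter}; and the classification of embedded CMC tori (Brendle, Andrews--Li, Kilian--Schmidt--Schmitt) forces $\gamma_{\min}=\gamma^b$ up to isometry. Hence $\delta^2\mathcal{E}_{\beta^b}\geq 0$ at $\gamma^b$, with kernel at most one-dimensional modulo the isometries of $\mathcal{H}^2$ via the associated fourth-order ODE. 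Nothing in your outline substitutes for this step.

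Second, the conformal constraint. You propose to restrict to the constraint subspace and use that the constraint derivative is ``nondegenerate transverse to the isothermic locus''; but $f^b$ \emph{is} isothermic (it is CMC), so the constraint is precisely degenerate at these surfaces. The paper instead proves the stronger inequality $\delta^2\mathcal{W}-\beta^b\,\delta^2\Pi^2\geq 0$ on all normal variations, which requires computing the second variation of the Teichm\"uller projection explicitly; this produces a nonlocal operator $K$ (obtained by solving a $\bar\partial$-equation for the deformed holomorphic one-form), and the sign $\langle K\varphi,\varphi\rangle_{L^2}\leq 0$ together with $\beta^b>0$ is essential for positivity of the non-equivariant part. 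Your proposal never touches $\delta^2\Pi$. Finally, for the low Fourier modes you only list two candidate strategies; the paper's resolution of the $l=1$ mode---showing the kernel of the cross-term operator $\mathcal{Q}$ is exactly four-dimensional by converting the nonlocal equation into a fourth-order ODE, discarding the non-periodic $\cosh$ and $\sinh$ solutions, and matching the remaining kernel and the six non-positive directions with the M\"obius variations by continuity in $b$---is a concrete argument that your sketch does not supply. So the proposal is a plausible roadmap, but the decisive ideas (the minimization/classification argument for the profile curve and the explicit second variation of the conformal class at an isothermic surface) are missing.
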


\begin{Rem}
This theorem guarantees that variants of the implicit function theorem, as carried out in \cite{NdiayeSchaetzle1} and \cite{HelNdi}, can be applied to show that all solutions of the Euler-Lagrange equation $W^{4,2}$-close to $f^{\bar b}$ with $\Pi^1$-Lagrange multiplier $0\leq\alpha < \alpha^b$,  coincides with the $f^b$, if $f^{\bar b}$ is stable for $W_{\alpha^b}$.
\end{Rem}

\subsection*{Acknowledgements} The first author is supported by the DFG within the SPP {\em Geometry at Infinity}, and the second author is supported by RTG 1670 {\em Mathematics inspired by string theory and quantum field theory} funded by the  DFG.

\section{Constrained Willmore stability of the $2$-lobe family}\label{sec:stability}

In this section we show that the 2-lobed Delaunay tori $f^b$ $(b\neq b_0)$ are stable as constrained Willmore surfaces. For $b<b_0$ the surface $f^b$  is strictly stable \cite{KuwertLorenz}. At $b =b_0$ the stability operator has a $2$-dimensional kernel spanned by

\begin{equation}
\varphi = \cos(2x)\vec{n}^{b_0} \quad \text{ and } \quad \tilde \varphi = \sin(2x) \vec{n}^{b_0},
\end{equation}
where $x$ is the parameter of the profile curve and  $vec{n}^{b_0}$ is the normal of $f^{b_0}$.
Thus in order to prove Theorem \ref{stability}, it suffices to show the stability (up to invariance) of the second variation of $\mathcal W$ at $f^b$ for $b>b_0$.

For $b>b_0$ the surface $f^b$ arises by rotating its profile curve -- an arc length parametrized elastic curve $\gamma^b $ in the upper half plane $\mathcal H^2$ given by $$\mathcal H^2= \{(u, v) \in \R^2 \;|\; v>0\}$$ -- considered as the hyperbolic plane --  around the $u$-axis. The arc length parametrized closed curve $\gamma^b$ is given by
\[\gamma^b: x\in\R/2\pi b\Z\mapsto (u(x),v(x)),\]
with $L=2\pi b$ being its length. It satisfies 
\[u'(x)^2+v'(x)^2=v(x)^2.\]
 The oriented unit normal in $\mathcal H^2$ is given by
\[\vec{n}_g = -v'(x) \frac{\partial}{\partial u}+u'(x)\frac{\partial}{\partial v}.\]

Instead of the Euclidean $3$-space $\R^3$, we consider $f^b$ mapping into the subspace $\mathcal H^2 \times S^1 \subset \R^3 \subset S^3$ with conformally equivalent metric 
$$g=  \tfrac{1}{v^2} (du\otimes du + dv\otimes dv)  + d \varphi \otimes d\varphi,$$

with singularities for $v =0.$ In these coordinates the corresponding conformally parametrized torus of revolution has a particularly simple form
\[f^b\colon M=\R/2\pi b\Z\times \R/2\pi\Z\longrightarrow \mathcal H^2\times S^1;\;\;  (x,y)\longmapsto (u(x),v(x), y)\]
with $v(x)>0$ for all $x$. This defines a conformal immersion into $\R^3\subset S^3.$ The fibers of the surface, i.e., the curves given by $$y \mapsto f(x_0, y)$$ for a fixed $x_0 \in \R/2\pi b,$ are geodesics
with respect to $g$. 

The Willmore functional as specified in \cite{Chen} is given by
\[\mathcal W(f) = \int_M (H^2-G)dA\]
where $H$ is the mean curvature, $G$ is the determinant of the second fundamental form (both with respect to the induced metric), and $dA$ is the induced area form. With this definition $\mathcal W$ is invariant under conformal changes of the ambient metric. We use $g$ to compute the Willmore functional and its derivatives.

Since $f^b$ is an isometric immersion and the fibers are geodesics we obtain
\[H=\kappa, \quad G=0, \quad  \text{ and } \quad dA=dx\wedge dy,\]
with $\kappa=\kappa(x)$ being the geodesic curvature of $\gamma^b$ in the hyperbolic plane. Thus the conformal invariance of the Willmore functional gives:

$$\mathcal W (f^b) = \int_ {T^2} (\tfrac{1}{4}H_{\R^3}^2 ) dA_{\R^3} = \tfrac{1}{4} \pi \int_0^{2 \pi b} \kappa^2 ds = \tfrac{1}{4} \pi  \mathcal E(\gamma),$$

and $\mathcal E$ is the elastic energy functional of curves in $\mathcal H^2.$

\subsection{Variation of the profile curve}\label{sec:varprof}
Consider normal variations $$\Phi(x,y) = \varphi(x,y) \vec{n}_{g}(x,y),$$
where $ \vec{n}_{g}$ is the unit length normal field of the surface with respect to $g.$
For every fixed $y_0$ we have that $\gamma^b_{y_0} = f^b(x, y_0)$ is an arc length parametrized elastic curve in $\mathcal H^2$, and $\Phi(x, y_0)$ is a normal variation of the curve $\gamma^b_{y_0} $. 

Since the Willmore functional of the surface $\mathcal W(f^b)$ is the energy functional $\mathcal E$ of the curve $\gamma^b_{y_0}$ and the conformal constraint corresponds to the length constraint on curves \cite{Pinkall-Hopf}, we have that the second variation satisfies for every $y_0\in\R/2\pi\Z$

\begin{equation}
\begin{split}\delta^2\mathcal W_{\beta^b} (f^b) (\varphi(x, y_0)\vec{n}_g(x,y)) &= \delta^2 \mathcal E (\gamma_{y_0}) (\Phi(x,y_0)) -  \beta^b \delta^2 \mathcal L (\gamma^b) (\Phi (x, y_0))\\ &=: \delta^2 \mathcal E_{\beta^b} (\gamma_{y_0} )(\Phi(x,y_0)),\end{split}\end{equation}

where $\mathcal L$ is the length functional of curves in $\mathcal H^2$. 

\begin{Lem}\label{curvestability}
Let $f^b$ be the 2-lobed Delaunay torus with conformal class $(0,b)$ and $\gamma^b$ the corresponding elastic curve in $\mathcal H^2.$ Then 
$$\delta^2 \mathcal E_{\beta^b} (\gamma_{y_0}) (\Phi(x,y_0), \Phi(x,y_0)) \geq 0$$

for normal variations $\Phi$ preserving the length constraint.
Moreover, for $b>b_0$ the kernel of $\delta^2 \mathcal E_{\beta^b}$ is at most $1$-dimensional modulo  the isometries of  $\mathcal H^2.$
\end{Lem}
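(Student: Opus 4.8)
The plan is to identify $\delta^2\mathcal E_{\beta^b}(\gamma^b)$ with a fourth order Jacobi operator, to exploit that the geodesic curvature $\kappa=\kappa^b$ of $\gamma^b$ is an explicit Jacobi elliptic function, and to control its lowest eigenvalue on the length--constrained subspace by combining a local perturbation analysis at $b_0$ with the finite--gap structure of the operator.

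\textbf{Step 1 (the Jacobi operator).} For a normal variation $\Phi=\varphi\,\vec n_g$ with $\varphi=\varphi(x)$ a function on $\R/2\pi b\Z$ ($x$ arc length), I would write the index form as
\[
\delta^2\mathcal E_{\beta^b}(\gamma^b)(\varphi,\varphi)=\int_0^{2\pi b}\varphi\,\mathcal L_b\varphi\,dx ,
\]
where $\mathcal L_b$ is a formally self--adjoint ODE operator with leading term $2\,\partial_x^4$ whose remaining coefficients are polynomials in $\kappa,\kappa'$ and $\beta^b$; here one uses the hyperbolic elastica equation -- the Euler--Lagrange equation of $\mathcal E_{\beta^b}$ on $\mathcal{H}^2$, a second order ODE for $\kappa$ -- to eliminate higher derivatives of $\kappa$. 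It is convenient to organise the computation through the curvature linearisation $\delta\kappa=\varphi''+(\kappa^2-1)\varphi$, since $\int 2(\delta\kappa)^2\,dx$ already accounts for the top order part of $\mathcal L_b$, the remaining lower order contributions then being rewritten with the elastica equation. The length constraint amounts to restricting $\mathcal L_b$ to the codimension--one subspace $W_b=\ker(\delta\mathcal L)=\{\varphi:\int_0^{2\pi b}\kappa\varphi\,dx=0\}$.

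\textbf{Step 2 (non-negativity -- the main point).} The curve $\gamma^b$, the multiplier $\beta^b$, hence $\mathcal L_b$ and $W_b$, depend real analytically on $b$, so the eigenvalues of $\mathcal L_b|_{W_b}$ vary continuously; for $b<b_0$ they are all positive by \cite{KuwertLorenz}, and at $b=b_0$ (where $\kappa$ is constant and $\mathcal L_{b_0}$ has constant coefficients) the kernel is exactly the span of $\cos(2x)\vec n^{b_0}$ and $\sin(2x)\vec n^{b_0}$, the operator being positive otherwise. It therefore suffices to check that no eigenvalue of $\mathcal L_b|_{W_b}$ becomes negative for $b\ge b_0$. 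Locally at $b_0$ a Feynman--Hellmann computation should show that the two vanishing eigenvalues move in the nonnegative direction as $b$ increases. Globally for $b>b_0$ the structural point is that, $\kappa$ being Jacobi elliptic, $\mathcal L_b$ is of Lam\'e (finite--gap) type: its periodic and antiperiodic eigenvalues are the zeros of explicit elliptic expressions, so the relevant band edges can be tracked and shown never to descend below $0$. An attractive alternative would be to produce a Bogomolny--type factorisation $\mathcal L_b|_{W_b}=\mathcal A_b^{\ast}\mathcal A_b$ with $\mathcal A_b$ a second order operator, giving non-negativity at once; in either case I expect this global control of the lowest eigenvalue to be the main obstacle.

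\textbf{Step 3 (kernel dimension).} For $b>b_0$ the geodesic curvature of $\gamma^b$ is non-constant, so $\gamma^b$ is not an orbit of any one--parameter subgroup of the $3$-dimensional isometry group $\PSL(2,\R)$ of $\mathcal{H}^2$; consequently the three Killing fields of $\mathcal{H}^2$ restrict to $\gamma^b$ with linearly independent normal components $\varphi_1,\varphi_2,\varphi_3$, which span a three--dimensional subspace $V\subseteq\ker\mathcal L_b$ contained in $W_b$ (isometries preserve both $\mathcal E$ and $\mathcal L$). The degeneracy space of $\mathcal L_b|_{W_b}$ is $\{\varphi\in W_b:\mathcal L_b\varphi\in\R\kappa\}$; since $\mathcal L_b$ is of fourth order one has $\dim\ker\mathcal L_b\le4$, and a short argument -- using $V\subseteq\ker\mathcal L_b$, $V\perp\kappa$, and a direct inspection of the explicit Lam\'e solutions of $\mathcal L_b\varphi=0$ -- shows this degeneracy space is at most one--dimensional modulo $V$, i.e. modulo the isometries of $\mathcal{H}^2$. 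Any such extra direction is by construction a deformation of the profile curve alone; passing it through the reduction of Section~\ref{sec:stability} it accounts for the degenerate variation $\varphi$ with $\delta^2\Pi^1(f^b)(\varphi,\varphi)=0$ appearing in Theorem~\ref{stability}.
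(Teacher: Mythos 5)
Your Step 3 is essentially the paper's kernel argument (a fourth order linear ODE has at most a $4$-dimensional solution space, three directions of which come from the Killing fields of $\mathcal H^2$, leaving at most one extra direction modulo invariance), and your Step 1 reduction to a fourth order index form is unobjectionable. The problem is Step 2, which is the heart of the lemma and which you do not actually prove. You propose to track eigenvalues in $b$: positivity for $b<b_0$ from \cite{KuwertLorenz}, a Feynman--Hellmann computation at $b_0$ that ``should show'' the two vanishing eigenvalues move upward, and then global control for all $b>b_0$ either via an asserted finite--gap/Lam\'e structure of the fourth order operator with explicitly computable band edges, or via a hoped-for factorisation $\mathcal L_b|_{W_b}=\mathcal A_b^*\mathcal A_b$. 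None of these is carried out, and you yourself flag the global eigenvalue control as ``the main obstacle.'' A local exchange-of-stability computation at the bifurcation point cannot by itself exclude an eigenvalue crossing zero at some larger $b$, the finite--gap claim for this fourth order operator (as opposed to the second order Schr\"odinger operators where such band-edge formulas are standard) is not established, and no factorisation is produced. So the non-negativity statement, which is the actual content of the lemma, remains unproven in your proposal.

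The paper avoids this spectral analysis entirely by a global variational argument: for fixed length $L=2\pi b$ the constrained minimization problem $\inf\{\mathcal E(\gamma)\mid\gamma\text{ closed in }\mathcal H^2,\ \mathcal L(\gamma)=L\}$ has a minimizer $\gamma_{\min}$ which is an elastic curve (Langer--Singer), and at a minimizer $\delta^2\mathcal E_{\beta^b}\ge 0$ automatically. Rotating $\gamma_{\min}$ gives an isothermic constrained Willmore torus whose energy is below $8\pi$ (using $f^b$ as competitor), which by \cite{HHNd1} and \cite{Richter} must be CMC in $S^3$; the classification of embedded CMC tori (Brendle, Andrews--Li, together with \cite{KilianSchmidtSchmitt1}) then forces this torus to be $f^b$ itself, so $\gamma_{\min}=\gamma^b$ up to isometry and the second variation at $\gamma^b$ is nonnegative. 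This identification of $\gamma^b$ as the global constrained minimizer is the missing idea; without it (or a completed version of your spectral program) your argument does not establish the inequality claimed in the lemma.
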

\begin{proof}
For $L = 2 \pi b >0$ fixed, consider the minimization problem 

$$\inf \; \{\mathcal E(\gamma)\; | \; \gamma \text{ is a closed curve in } \mathcal H^2 \text{ with length L }\}.$$

The minimum is attained by an elastic curve $\gamma_{\min}$ \cite{LangerSinger2} for which 

$$\delta^2 \mathcal E_{\beta^b} (\gamma_{\min}) (\Phi(x,y_0), \Phi(x,y_0))\geq 0.$$

Rotating $\gamma_{\min}$ we obtain an embedded and isothermic constrained Willmore torus $f^{\min}$ in $S^3.$ The Willmore energy of $f^{\min}$ must be below $8 \pi,$ by using $f^b$/$\gamma^b$s as competitor.  Thus by 
\cite[Theorem 1]{HHNd1}  and \cite{Richter}, $f^{\min}$ is CMC in $S^3.$  The classification of embedded CMC tori \cite{Brendle, AndrewsLi} and \cite{KilianSchmidtSchmitt1} shows that $f^{\min} = f^b$ for the respective $b >1$ up to invariance. Therefore, we have $\gamma_{\min} = \gamma^b$ up to isometry.

Since $\delta^2 \mathcal E_{\beta^b} \geq0$, kernel elements consists of eigenvectors with respect to eigenvalue 0 of $\delta^2 \mathcal E_{\beta^b}.$ This gives rise to a 4th order linear ODE, see \cite{LangerSinger2}, which has  at most $4$-dimension worth of solutions, three of which corresponds to the 3-dimensional space of isometries of $\mathcal H^2.$ 
Thus up to invariance of the equation the kernel is at most one-dimensional.
\end{proof}

\begin{Rem}
The Willmore energy of $f^b$ is shown to be monotonically increasing \cite{KilianSchmidtSchmitt1}. The Lagrange multiplier of $f^b$ (which corresponds to its constant mean curvature in $S^3$)  is shown to be strictly increasing for $b < b_0$ and strictly decreasing for $b>b_0$ \cite{KilianSchmidtSchmitt1}. \end{Rem}

\subsection{General variations}
 
In order to compute the full second variation $\delta^2 \mathcal W_{\beta^b},$ we first explicitly parametrize a normal variation $F_t^b$ of $f^b$, compute the first and second fundamental form and then determine the Taylor expansion of the Willmore energy at $t=0$ to the second order. The second order term is then

$$\left(\frac{d}{d t}\right)^2|_{t=0} \mathcal W(F_t^b ) = D^2 \mathcal W(f^b) (\del_tF_t	^b, \del_tF_t	^b) + D \mathcal W (\del^2_{tt}F_t^b).$$

A short computation shows that the following holds for a variation $F_t^b$ of a constrained Willmore torus:
$$D W(\del^2_{tt} F_t^b) = -\beta^bD^2 \Pi^2 (\del_t F_t^b \del_t F_t^b) + \beta^b \left(\frac{d}{d t}\right)^2|_{t=0}(\Pi^2(F_t^b)).$$

A constrained Willmore torus is stable for $\mathcal W$ if 
$$\left(\frac{d}{d t}\right)^2|_{t=0}\mathcal W(F_t^b ) \geq 0$$
for all variations $F_t^b$ preserving the conformal type. In this case $$\left(\frac{d}{d t}\right)^2|_{t=0}(\Pi^2(F_t^b)) \equiv0.$$To determine whether a surface is stable we will show that 
\begin{equation}\label{2W-2P}
\left(\frac{d}{d t}\right)^2|_{t=0} \mathcal W(F_t^b )- \beta^b \left(\frac{d}{d t}\right)^2|_{t=0}(\Pi^2(F_t^b)) \geq 0
\end{equation}

for all variations $F_t^b.$ 

Similar arguments as in Section \ref{sec:varprof} show that  $\delta^2 \mathcal W_{\beta^b} = \delta^2 \mathcal W + \beta^b \delta^2 \Pi^2$ can be decomposed into

$$\delta^2 \mathcal W_{\beta^b} (f^b)  = \delta^2 \mathcal E_{\beta^b} + \mathcal Q,$$

where $\delta^2 \mathcal E_{\beta^b}$ is tensorial in $y,$ i.e., for normal variations of the form 
$$\Phi(x,y) = p(x) q(y) \vec{n}_{\tilde g}$$ we have

$$\delta^2 \mathcal E_{\beta^b} (f^b)(\Phi(x,y), \Phi(x,y)) = \delta^2 \mathcal E_{\beta^b}(f^b)(p(x) \vec{n}_{\tilde g}, p(x) \vec{n}_{\tilde g}) q^2(y).$$

The term $\mathcal Q$ in this decomposition has no 0th order term in $y.$  We want to show that $\mathcal Q$ is strictly positive unless the normal variation given by $\varphi$ is a M\"obius variation. Together with Lemma \ref{curvestability} this implies Theorem \ref{stability}. We further  split  $\mathcal Q$ with respect to \eqref{2W-2P}
$$ \mathcal Q= \mathcal Q_1 + \mathcal Q_2$$
where $\mathcal Q_1$ corresponds to $\left(\frac{d}{d t}\right)^2|_{t=0} \mathcal W(F_t^b )$ and $\mathcal Q_2$ corresponds to $\left(\frac{d}{d t}\right)^2|_{t=0}(\Pi^2(F_t^b)).$

\subsection{Explicit formulas}
For the 2-lobed Delaunay torus $f^b$ consider normal variations determined by 
$$\Phi(x,y) = \varphi(x,y) \vec{n}_g,$$

for a function $\varphi \colon T^2 \to\R$, i.e.,
\[t\mapsto F_t^b\colon T^2\to \H^2\times S^1; (x,y)\mapsto (u(x)-t v'(x) \varphi(x,y) ,v(x)+t u'(x) \varphi(x,y), y).\]
$F_t^b$ is an immersion for every $t\sim 0$, although $F_t^b $ is generally not conformal for $t\neq0$.

For $t\sim0$ the induced family of metrics $g_t$ of $F_t^b$ on the torus 
\[T^2_b=\R/(2\pi b\Z)\times \R/(2\pi \Z)\]
are given by:

\[g_t=dx\otimes dy+dy\otimes dy+t(\alpha dx\otimes dx)+t^2(\beta dx\otimes dx+\gamma(dx\otimes dy+dy\otimes dx)+\delta dy\otimes dy)+\dots,\]
where
\begin{equation}\label{metricexpansion}
\begin{split}
\alpha&=-2\kappa\varphi\\
\beta&=\varphi_x^2+\kappa^2\varphi^2+\frac{v'^2}{v^2}\varphi^2+\frac{2}{v}(\kappa u'\varphi^2+v'\varphi\varphi_x)\\
\gamma&=\frac{v'}{v}\varphi_y\varphi+\varphi_x\varphi_y\\
\delta&=\varphi_y^2.
\end{split}
\end{equation}
The volume forms are  $dA_t = \sqrt{\det(g_t)}.$ and the second fundamental form at $t=0$ is given by 
\begin{equation}
II_0 = \begin{pmatrix} \kappa&0\\0&0\end{pmatrix} .
\end{equation}

The equation for the geodesic curvature $\kappa$ of $\gamma^b$ gives
\[u''(x)=-v'(x)\kappa(x)+\frac{2 u'(x) v'(x)}{v(x)}\;\;\text{ and } v''(x)=u'(x) \kappa(x)- \frac{u'(x)^2-v'(x)^2}{v(x)}.\]

\subsubsection{The second derivative of $\mathcal W(F_t^b)$}
We expand the real function $t \mapsto W(F_t^b)$ at $t=0$ up to second order. 
For convenience, the dependencies of the involved functions are suppressed, and $\varphi_{x}:=\frac{\partial  \varphi}{\partial x},\dots\;$. 
Then a lengthy but straight forward computation shows that

\[D\mathcal W_f(\varphi):=\frac{d}{dt}\mid_{ t=0}\mathcal W(F_t^b)=-\frac{1}{4}\int_M \left(2\kappa''+\kappa^3-2\kappa\right) \varphi \;dx \wedge dy\]
and the second derivative splits into
\[\left(\frac{d}{dt}\right)^2{\mid _{t=0}}\mathcal W(F_t^b)= \int_M\left(\mathcal A + \mathcal Q_1(\varphi)\right)\varphi \;dx\wedge dy,\]
where $\mathcal A$ tensorial in $y$ (in the above sense) and $\mathcal Q_1$ is given by

\begin{equation}
\mathcal Q_1(\varphi)=\left(\tfrac{1}{8}\kappa^2 +\tfrac{1}{2}\right)\varphi_{yy}+\tfrac{1}{4} \varphi_{yyyy}+\tfrac{1}{2} \varphi_{xxyy}.
\end{equation}

\subsubsection{Second derivative of the conformal type}
We compute the change of the induced conformal structure for normal variations. The Teichm\"uller space of Riemann surfaces of genus 1 is parametrized by the modulus $\tau$ 
\[\tau=\frac{\int_{\alpha_1}\omega}{\int_{\alpha_2}\omega}\]
for a non-zero holomorphic 1-form $\omega$ and an appropriate choice $\alpha_1,\alpha_2$ of generators of the first fundamental group such that $\Im{\tau}>0.$ 
In the following, we choose $\alpha_1,\alpha_2$ such that
\begin{equation}\label{taunormali}\tau\in\left\{c\in\mathbb C\mid -\frac{1}{2}<\Re{c}\leq \frac{1}{2};\Im{c}>0, c\bar c\geq1\right\}.\end{equation}

Consider a family of 1-forms with respect to the metric $g_t$ (given by \eqref{metricexpansion})
\begin{equation}
\begin{split}\omega_t=&dx+ i dy+t\left(\left(\frac{\alpha}{2}+p\right)dx+ip dy\right)\\&+t^2\left(\left(-\frac{1}{8}\alpha^2+\frac{1}{2}\beta+i\gamma-\frac{1}{2}\delta+\frac{1}{2}\alpha p+q\right)dx+i q dy \right)\\
=&\left(1+t p+t^2q\right) dz+t \frac{\alpha}{2} dx+t^2\left(-\frac{1}{8}\alpha^2+\frac{1}{2}\beta+i\gamma-\frac{1}{2}\delta+\frac{1}{2}\alpha p\right)dx\end{split}\end{equation}
for arbitrary functions $p,q\colon T^2\to\mathbb C.$
Let $J_t$ be the rotation by $\frac{\pi}{2}$ of $g_t$, then  
\[J_t^*\omega_t= i\omega_t+O(t^3).\]
Hence, $\omega_t$ is a $(1,0)$-form up to second order in $t.$ We want to determine the functions $p$ and $q$ such that 
$\omega_t$ is closed (hence holomorphic) up to second order in $t$. 

In first order we obtain the equation
\[d(p\;dz)=-d\;\frac{\alpha}{2}\wedge dx=-i\frac{\alpha_y}{4}\;d\bar z\wedge dz.\]
The change of conformal type is given by the change of the ratio of the periods of $\omega_t$ along
the generators 

\[(2\pi b, 0) \text{ and } (0,2\pi)\]

of $\pi_1(T^2).$ It is most convenient to choose a function $p$ such that the integral of $\omega_t$ along $(0,2\pi)\in\pi_1(T^2)$ is independent of $t$. This can be achieved to first order by 
choosing a solution $p$ of the equation
\begin{equation}\label{dbarpay}\bar\partial p=-i\frac{\alpha_y}{4}\; d\bar z\end{equation}
that is perpendicular (with respect to the $L^2$ inner product of the area form $\frac{i}{2}dz\wedge d\bar z$) to the constant functions. In this case there exists a unique solution of \eqref{dbarpay} by Serre-duality since 
\[\int\alpha_y \; d\bar z\wedge dz=0\]
by Stokes.
In fact, we then obtain

\[\int_{(0,2\pi)}p\; dz=i\int_{(0,2\pi)}p \;dy=\frac{i}{2\pi b}\int_{T^2} p \;dx\wedge dy=0,\]

where the second to last equality is due to Fubini and the fact that

\[p\;dz+\frac{\alpha}{2}\;dx\]

is closed. Hence, due to \eqref{taunormali} the change of conformal type of $g_t$
is given by

\[\tau_t=b\; i+t\frac{i}{2\pi}\int_{(2\pi b,0)}\left(\frac{\alpha}{2}\;dx +p \;dz\right)+O(t^2).\]

Note that
\[\int_{(2\pi b,0)}\left(\frac{\alpha}{2}\;dx +p \;dz\right)=\frac{i}{2\pi}\int_{T^2 } \left(\frac{\alpha}{2} +p\right)dz\wedge d\bar z=\frac{i}{2\pi}\int_{T^2} \frac{\alpha}{2} \; dz\wedge d\bar z=\frac{1}{2\pi}\int_{T^2} \alpha \; dx\wedge dy\] is real as $\alpha$ is real.
Also note that we are only interested in normal variations which do not change the conformal type in first order, i.e. we impose that
\begin{equation}\label{gtconfcons}
0=\frac{1}{2\pi}\int_{T^2} \alpha \; dx\wedge dy=-\frac{1}{\pi}\int_{T^2}\kappa\varphi \; dx\wedge dy.\end{equation}

To compute the change of the conformal type to second order, we proceed as for the first order variation and
take the unique solution $q\colon T^2\to\C$  of
\[\bar\partial q=\frac{\partial}{\partial y}\left(-\frac{1}{8}\alpha^2+\frac{1}{2}\beta+i\gamma-\frac{1}{2}\delta+\frac{1}{2}\alpha p\right) d\bar z\]
perpendicular to the constant functions. Analogously to the first order computations, and under the conformal constraint \eqref{gtconfcons}, the conformal type of the induced metric $g_t$ changes in second order as
\[\left(\frac{d}{dt}\right)^2\mid_{t=0} \tau_t = \frac{i}{\pi}\int_{(2\pi b,0)} \left(-\frac{1}{8}\alpha^2+\frac{1}{2}\beta+i\gamma-\frac{1}{2}\delta+\frac{1}{2}\alpha p\right)dx,\]
which gives us (using the first order constraint  and  \eqref{metricexpansion})
\begin{equation}
\begin{split}
\tau_t=&b i+t^2\frac{i}{4\pi^2}\int_{T^2}\left(-\frac{1}{8}\alpha^2+\frac{1}{2}\beta+i\gamma-\frac{1}{2}\delta+\frac{1}{2}\alpha p\right) dx\wedge dy+O(t^3)\\
=&b i+t^2\frac{i}{4\pi^2}\int_{T^2}\left(\frac{1}{2}\Re{\alpha p}+\frac{u'}{v}\kappa \varphi^2+\frac{v'^2}{2v^2}\varphi^2-\frac{1}{2}\varphi_y^2+\frac{v'}{v}\varphi\varphi_x+\frac{1}{2}\varphi_x^2\right) dx\wedge dy\\
&-t^2\frac{1}{4\pi^2}\int_{T^2}\left(\frac{1}{2}\Im{\alpha p}+\frac{v'}{v}\varphi\varphi_y+\varphi_x\varphi_y\right) dx\wedge dy +O(t^3).
\end{split}\end{equation}

Note that only the real part is relevant for us as we are only  interested in the imaginary part of the change 
of the modulus $\tau$, and that the functions $u,v,\kappa,\varphi$ are real-valued. 
We still need to analyze
$$\int \tfrac{1}{2}\alpha p \; dx\wedge dy.$$

Recall that $p$ is the unique solution of $\bar\partial p=-i\frac{\alpha_y}{4} d\bar z$
perpendicular to the constants. 

Hence, for
\[\alpha=-2\kappa\varphi=\sum_{k,\tfrac{l}{b}\in\N}  \left(a^{kl}_1\cos{l x}\cos{k y}+a^{kl}_2\cos{l x}\sin{k y}+a^{kl}_3\sin{l x}\cos{k y}+a^{kl}_4\sin{l x}\sin{k y}\right)\]
we have
\[p=\frac{1}{2}\nu_x-\frac{i}{2}\nu_y\]
where
 \[\nu=\sum_{k,l\cdot b\in\N; k>0}\left(-\frac{i k}{k^2+l^2}(a^{kl}_1\cos{l x}\sin{k y}-a^{kl}_2\cos{l x}\cos{k y}+a^{kl}_3\sin{l x}\sin{k y}-a^{kl}_4\sin{l x}\cos{k y})\right).\]

Using Fubini and integration by parts
together with 
\[\int_{T^2}\cos^2{l x}\cos^2{k y} dx\wedge dy= \int_{T^2}\sin^2{l x}\sin^2{k y} dx\wedge dy=\pi^2 b\]
(independently of $k\in\N, l/b\in\N$), we obtain
\begin{equation}
\begin{split}
\Re\left(\int_{T^2}\frac{1}{2}\alpha p \; dx\wedge dy\right)
=-\frac{\pi^2b}{4}\sum_{k,\tfrac{l}{b}\in\N; k>0}\frac{k^2}{k^2+l^2} \left((a_1^{kl})^2+(a_2^{kl})^2+(a_3^{kl})^2+(a_4^{kl})^2\right). \end{split}\end{equation}

Putting all terms together we have
\[\left(\frac{d}{dt}\right)^2{\mid _{t=0}}\Pi^2(F_t)= \int_{T^2}\left(\mathcal B + \mathcal Q_2(\varphi)\right) \varphi \; dx\wedge dy,\]
for some $\mathcal B$ tensorial in $y$ and with 
\[\mathcal Q_2(\varphi) =  \tfrac{1}{2} \varphi_{yy} +  \tfrac{1}{2}K,\] 
where \[<K(\varphi),\varphi>_{L^2}=\int_{T^2} \Re (\alpha p(\varphi))=-2\int_{T^2}\kappa\varphi \Re(p(\varphi)),\] $\alpha=-2\kappa \varphi$, and $p=p(\varphi)$ is determined by \eqref{dbarpay}.

\begin{Rem}
Note that  
$$\delta^2\mathcal E_{\beta^b}=\mathcal A+ \mathcal B.$$ 
\end{Rem}
\subsection{Final estimates}

\begin{Lem}\label{lneq1}
Let $f^b$ be the 2-lobed Delaunay torus of conformal type $(0,b)$ and 
$$\mathcal Q= \left ( \tfrac{1}{8}\kappa^2 +  \tfrac{1}{2}\right )\del^2_{yy} + \tfrac{1}{4}\del^4_{yyyy} +\tfrac{1}{2} \del^4_{xxyy} - \beta^b \left( \tfrac{1}{2} \varphi_{yy} + \tfrac{1}{2}K \right) ,$$
where 
$\kappa= \kappa(x)$  is the geodesic curvature of the profile curve $\gamma^b.$ Then we have for $\varphi : T^2 \rightarrow \R$
$$\langle \mathcal Q(\varphi), \varphi \rangle_{L^2} \;  > \;0$$ if $\varphi(x_0,y)\perp_{L^2} \sin(y), \cos(y)$ for all $x_0 \in \R/ 2 \pi b \Z.$
\end{Lem}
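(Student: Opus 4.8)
The plan is to diagonalize $\mathcal Q$ in the $y$-Fourier modes and then, for each admissible mode $k \geq 2$, bound the resulting one-dimensional operator in $x$ from below. Expand $\varphi(x,y) = \sum_{k} \varphi_k(x)\, e^{iky}$ (or the real cosine/sine version); the hypothesis $\varphi(x_0,\cdot)\perp_{L^2}\{\sin y,\cos y\}$ for all $x_0$ says precisely that $\varphi_0=\varphi_{\pm 1}=0$, so only modes $|k|\geq 2$ occur. Because every term in $\mathcal Q$ — the multiplication operators $\tfrac18\kappa^2+\tfrac12$, the pure $y$-derivatives $\del^2_{yy}$ and $\del^4_{yyyy}$, the mixed $\del^4_{xxyy}$, and $-\beta^b(\tfrac12\del^2_{yy}+\tfrac12 K)$ — is at most quadratic in $\del_y$ acting by $-k^2$ (note $K$ as defined involves only $\kappa$, $\varphi$ and the $\bar\partial$-inverse $p(\varphi)$, all of which are $y$-local in the sense that $K$ preserves Fourier modes since $p$ does), the quadratic form splits as
\[
\langle \mathcal Q(\varphi),\varphi\rangle_{L^2} \;=\; \sum_{|k|\geq 2} \langle \mathcal Q_k(\varphi_k),\varphi_k\rangle_{L^2(\R/2\pi b\Z)},
\]
where
\[
\mathcal Q_k \;=\; k^4\Bigl(\tfrac14 + \tfrac12 k^{-2}\, \del^2_{x}\Bigr) \;-\; k^2\Bigl(\tfrac18\kappa^2 + \tfrac12 - \tfrac{\beta^b}{2}\Bigr) \;-\; \tfrac{\beta^b}{2}\, k^2\, (\text{$K$-part}),
\]
after reorganizing signs (I will have to be careful with the sign of $\del^2_x$ coming from $\del^4_{xxyy} \mapsto -k^2\del^2_x$ — this makes $\mathcal Q_k$ a genuinely second-order operator in $x$ with the "wrong" sign, so positivity is not automatic and must use $k\geq 2$).

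The key step is the comparison with the already-established positivity of the curve operator. From Lemma~\ref{curvestability} we know $\delta^2\mathcal E_{\beta^b}\geq 0$ with at most one-dimensional kernel (mod isometries), and the Remark records $\delta^2\mathcal E_{\beta^b}=\mathcal A+\mathcal B$, i.e.\ it is exactly the $y$-tensorial part, corresponding to the $k=0$ piece that the hypothesis kills. What I would do is write $\mathcal Q_k = k^2\bigl(\tfrac{k^2}{4}+\tfrac12\del_x^2 - \tfrac14\kappa^2 - 1\bigr) + k^2 R$ where the first bracket is, up to the positive factor $k^2$ and after shifting, essentially $\tfrac{k^2-?}{4} + \delta^2\mathcal E_{\beta^b}$-type expression plus a manifestly positive gap growing like $k^2$; more precisely I expect the clean statement to be that $\mathcal Q_k \geq k^2\bigl(\text{(stabilized Jacobi operator of the elastic curve)} + (\tfrac{k^2}{4}-c)\bigr)$ for an explicit constant $c$, so that for $k\geq 2$ the spectral gap beats any negative eigenvalue of the curve's stabilized operator on the orthogonal complement of the length constraint. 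The length constraint itself needs care: for $k\neq 0$ the mode $\varphi_k$ is automatically $L^2$-orthogonal to the constant (the "length variation" direction lives in $k=0$), so the constrained positivity of Lemma~\ref{curvestability} applies without the constraint being an extra hypothesis — this is the mechanism that upgrades $\geq 0$ to $>0$.

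The main obstacle will be controlling the nonlocal term $K$ (equivalently the $\Re(\alpha p)$ contribution) mode by mode. One must show that on the $k$-th Fourier sector the operator $\tfrac{\beta^b}{2}k^2 K_k$ does not overwhelm the $k^4$ gain; from the explicit diagonalization $\Re\int\tfrac12\alpha p = -\tfrac{\pi^2 b}{4}\sum \tfrac{k^2}{k^2+l^2}((a_1^{kl})^2+\cdots)$ given in the excerpt, the form $\langle K(\varphi_k),\varphi_k\rangle$ is a negative, bounded (by a multiple of $\|\varphi_k\|^2$, since $\tfrac{k^2}{k^2+l^2}\leq 1$) multiple of the $L^2$ norm in the $x$-Fourier expansion; so $-\beta^b k^2 \langle K\varphi_k,\varphi_k\rangle/2$ is in fact \emph{positive} (since $\beta^b>0$ and $K\leq 0$), which helps rather than hurts — the genuine fight is only between $k^4$, the negative $-k^2(\tfrac18\kappa^2-\tfrac{\beta^b-1}{2})$ term, and the indefinite $\tfrac12 k^2\del_x^2$ term, all of which are local and handled by the $k=0$ stability plus the explicit gap. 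I would therefore spend most of the write-up (i) pinning down the exact reduction $\langle\mathcal Q\varphi,\varphi\rangle=\sum_{|k|\geq2}\langle\mathcal Q_k\varphi_k,\varphi_k\rangle$ with correct constants, (ii) proving $\mathcal Q_2>0$ using the strict-for-$k=0$-on-the-constraint-complement input together with the explicit $\beta^b$, $\kappa$ of $f^b$ (here invoking the monotonicity Remark on $\beta^b$ to get uniformity in $b$), and (iii) observing $\mathcal Q_k \geq \mathcal Q_2 + (\text{positive})$ for $k\geq 3$ by monotonicity in $k^2$ of each term's favorable part.
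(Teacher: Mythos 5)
Your opening reductions coincide with the paper's: since $\langle K(\varphi),\varphi\rangle_{L^2}\le 0$ and $\beta^b>0$, the nonlocal term only helps and may be dropped, and the remaining local operator diagonalizes in the $y$-Fourier modes, so the hypothesis reduces the problem to single modes $\varphi=\varphi_l(x)\sin(ly)$ with $l\ge 2$ (note, though, that orthogonality to $\sin y,\cos y$ only removes $l=\pm1$; the $l=0$ modes are harmless because every remaining term carries a $y$-derivative, not because the hypothesis kills them). Also, there is no ``indefinite'' $x$-term to fight: on the $l$-th mode the term $\tfrac12\del^4_{xxyy}$ contributes $+\tfrac{l^2}{2}\int(\del_x\varphi_l)^2\,dx\ \ge 0$ after integration by parts (your displayed $\mathcal Q_k$ has a sign slip here), so the only question is whether $\tfrac14 l^4$ dominates $\bigl(\tfrac18\kappa^2+\tfrac12-\tfrac{\beta^b}{2}\bigr)l^2$.

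That is where your proposal has a genuine gap: you never produce the quantitative bound that makes this domination true, and the substitute you sketch does not supply it. The paper's key input is a pointwise curvature estimate coming from the profile curve being an orbit-like elastica: $\kappa''+\tfrac12\kappa^3+(\mu-1)\kappa=0$ with $\mu=-\beta^b$, and the associated quartic $\tfrac14\kappa^4+(\mu-1)\kappa^2+\nu$ having four real roots forces $\kappa^2<4-4\mu=4+4\beta^b$, whence $\tfrac18\kappa^2+\tfrac12-\tfrac{\beta^b}{2}<1\le\tfrac14 l^2$ for $l\ge2$, which finishes the proof. Your plan is instead to compare $\mathcal Q_l$ with the constrained curve operator $\delta^2\mathcal E_{\beta^b}$ of Lemma \ref{curvestability} plus a spectral gap of order $l^2$, but this comparison is never made precise and is structurally mismatched: $\delta^2\mathcal E_{\beta^b}$ is a fourth-order operator in $x$ subject to the length constraint, while $\mathcal Q_l$ contains only $-\tfrac{l^2}{2}\del^2_{x}$ and a zeroth-order potential involving $\kappa^2$; nothing in Lemma \ref{curvestability} controls that potential pointwise, and ``the explicit $\beta^b$, $\kappa$ of $f^b$'' is precisely what still has to be estimated. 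Without the elastica ODE/quartic bound (or an equivalent estimate of $\kappa^2$ in terms of $\beta^b$), your steps (ii) and (iii) do not go through, so the argument is incomplete at its central point.
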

\begin{proof}
Since $\langle K(\varphi), \varphi \rangle_{L^2}  \leq0$ and $\beta^b >0,$ it suffices to show the positivity of 
$$\tilde {\mathcal Q} =  \left ( \tfrac{1}{8}\kappa^2 +  \tfrac{1}{2}- \tfrac{1}{2}\beta^b  \right)\del^2_{yy} + \tfrac{1}{4}\del^4_{yyyy} +\tfrac{1}{2} \del^4_{xxyy}.$$

We expand $\varphi$ with respect to the Fourier basis along the imaginary period of $T^2,$ i.e.,
\[\varphi=\sum_{l\in\Z} \varphi_l(x) \cos( ly)+\tilde \varphi_l(x) \sin( ly)\]
for periodic real-valued functions $\varphi_l$ and $\tilde \varphi_l.$
In $y$, the Fourier basis elements are perpendicular with respect to $\langle \tilde{\mathcal Q}(.), .\rangle_{L^2}$, therefore it suffices to show positivity for elements of the form  
\[ \varphi= \varphi_l(x)\sin(ly),\]

for which we compute:

\begin{equation}\langle\tilde { \mathcal Q}(\varphi), \varphi \rangle = \int_0^L(-\left ( \tfrac{1}{8}\kappa^2 +  \tfrac{1}{2} - \beta^b  \tfrac{1}{2}\right )l^2 + \tfrac{1}{4} l^4)\varphi_l^2 dx + \tfrac{1}{2}l^2\int_0^L (\del_x\varphi_l(x))^2 dx.
\end{equation}

In order to show positivity, we first need a point-wise estimate on $\kappa^2.$ 
The profile curve  $\gamma^b$ is an orbit-like elastic curve, i.e., it satisfies the ODE
$$\kappa'' + \tfrac{1}{2} \kappa^3 + (\mu- 1) \kappa =  0$$
with $\mu = -  \beta^b$, and the 4th order polynomial
$$P_4 := \tfrac{1}{4} \kappa^4 + (\mu -1) \kappa^2 + \nu$$
has $4$ real roots. For $\mu$ fixed the value of $\kappa$ is bounded by the biggest of the 4 roots of $P_4$. This gives

$$\kappa^2 < 4 - 4\mu.$$
Therefore, we obtain for $l \geq2$ positivity of $\tilde{\mathcal Q}.$
\end{proof}

\begin{Lem}
Let $f^b$ be the 2-lobed Delaunay torus of conformal type $(0,b)$, then  the
pseudo differential operator $\mathcal Q$ has a 4-dimensional kernel.
\end{Lem}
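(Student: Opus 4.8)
The plan is to use Lemma~\ref{lneq1} to collapse the kernel onto a single angular Fourier mode, in which $\mathcal Q$ becomes an explicit elliptic operator on the profile circle, to produce two kernel elements from the conformal symmetry of $S^3$, and to rule out any further ones by a symmetry/ODE argument. $\mathcal Q$ is formally self-adjoint and its coefficients depend only on $x$ (through $\kappa=\kappa(x)$), so it commutes with the rotations $y\mapsto y+c$ and preserves the angular Fourier decomposition of $L^2(T^2_b)$. By Lemma~\ref{lneq1}, $\langle\mathcal Q(\varphi),\varphi\rangle_{L^2}>0$ whenever $\varphi\not\equiv0$ is fibrewise $L^2$-orthogonal to $\cos y$ and $\sin y$; hence $\ker\mathcal Q\subseteq V_c\oplus V_s$, where $V_c=\{p(x)\cos y\}$ and $V_s=\{q(x)\sin y\}$. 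The quarter rotation $y\mapsto y+\tfrac\pi2$ commutes with $\mathcal Q$ and carries $V_c$ onto $V_s$, so $\dim\ker\mathcal Q=2\dim\ker(\mathcal Q|_{V_c})$, and it suffices to show $\dim\ker(\mathcal Q|_{V_c})=2$.

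\textbf{The operator on $V_c$.} Substituting $\varphi=p(x)\cos y$ into the formula for $\mathcal Q$ and using that $K$ maps $p(x)\cos y$ to $-2\kappa\,(1-\partial_x^2)^{-1}(\kappa p)\,\cos y$ (read off from the expression for $\langle K(\varphi),\varphi\rangle$ in the previous subsection) gives $\mathcal Q(p\cos y)=(\mathcal Q_c p)\cos y$, where
\[
\mathcal Q_c=-\tfrac12\partial_x^2+\Big(\tfrac{\beta^b}{2}-\tfrac14-\tfrac18\kappa^2\Big)+\beta^b\,\kappa\,(1-\partial_x^2)^{-1}(\kappa\,\cdot\,)
\]
is a formally self-adjoint elliptic pseudo-differential operator of order $2$ on $\R/2\pi b\Z$, hence Fredholm with finite-dimensional kernel. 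Writing $w:=(1-\partial_x^2)^{-1}(\kappa p)$, the equation $\mathcal Q_c p=0$ becomes the fourth-order linear ODE system $p''=(\beta^b-\tfrac12-\tfrac14\kappa^2)p+2\beta^b\kappa w$, $w''=w-\kappa p$, whose $2\pi b$-periodic solutions are in natural bijection with $\ker\mathcal Q_c$; in particular $\dim\ker\mathcal Q_c\le4$.

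\textbf{Two kernel elements (lower bound).} Since $f^b$ is a critical point of the M\"obius-invariant functional $\mathcal W-\beta^b\Pi^2$, its Hessian $\delta^2\mathcal W_{\beta^b}=\delta^2\mathcal E_{\beta^b}+\mathcal Q$ annihilates every normal variation $\varphi_X=p_X(x)\cos y$ induced by a conformal Killing field $X$ of $S^3$; as $\delta^2\mathcal E_{\beta^b}$ acts only in $x$, this gives $\mathcal Q_c p_X=-(\delta^2\mathcal E_{\beta^b})_{\mathrm{curve}}\,p_X$. Computing with the transverse conformal Killing fields of $\R^3\subset S^3$ landing in $V_c$ and using the elastica equation $\kappa''+\tfrac12\kappa^3+(\mu-1)\kappa=0$ ($\mu=-\beta^b$), one checks that the infinitesimal translation and the infinitesimal special conformal transformation perpendicular to the axis of revolution, with profiles $\tfrac{u'}{v^2}$ and $\tfrac{(v^2-u^2)u'-2uvv'}{v^2}$, lie in $\ker(\delta^2\mathcal E_{\beta^b})_{\mathrm{curve}}$ and hence in $\ker\mathcal Q_c$, while the remaining transverse direction (the axis tilt, which alters the length of $\gamma^b$) does not. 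These two profiles are linearly independent, so $\dim\ker\mathcal Q_c\ge2$.

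\textbf{Ruling out a larger kernel — the main obstacle.} The crux is to exclude $\dim\ker\mathcal Q_c=3$ or $4$, i.e.\ to show that at most two solutions of the above fourth-order system are $2\pi b$-periodic. I would use the $\Z/2$-symmetry $x\mapsto x+\pi b$ of the two-lobed profile curve (under which $\kappa$, and therefore the system, is invariant) to split the problem into its symmetric and antisymmetric parts and show that each carries at most a one-dimensional space of periodic solutions: either by combining the non-negativity $\mathcal Q_c\ge0$ (which should itself follow from the pointwise bound $\kappa^2<4-4\mu$ of Lemma~\ref{lneq1}, the sign of the non-local term, and a completion of squares after the symmetry reduction) with a Sturm/Wronskian estimate for the second-order problem obtained by dividing out a nowhere-vanishing kernel element, or — more concretely — by inserting the explicit Jacobi-elliptic form of $\kappa(x)$ for the two-lobed Delaunay curve and integrating the ODE to exhibit its full solution space. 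With the lower bound this yields $\dim\ker\mathcal Q_c=2$, hence $\dim\ker\mathcal Q=4$, in agreement with the kernel structure asserted in Theorem~\ref{stability}.
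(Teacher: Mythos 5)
Your overall architecture coincides with the paper's: use Lemma \ref{lneq1} to confine the kernel to the first Fourier mode in $y$, reduce via the rotation $y\mapsto y+\tfrac{\pi}{2}$ to a single scalar operator on the profile circle, convert the nonlocal term into a fourth-order ODE (your $(p,w)$-system is the same device as the paper's substitution $\bigl(\tfrac{d^2}{dx^2}-1\bigr)h=-2\kappa f$), obtain the lower bound from infinitesimal M\"obius transformations, and then argue that at most two of the four local solutions are periodic. The difficulty is that this last step --- which you yourself label ``the main obstacle'' --- is precisely the mathematical content of the lemma, and you do not prove it: you only list candidate strategies (a $\Z/2$ symmetry splitting combined with a Sturm/Wronskian estimate, or inserting the Jacobi-elliptic form of $\kappa$), and these rest on further unverified assertions such as ``$\mathcal Q_c\ge 0$, which should itself follow from\dots''. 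As written, the proposal only establishes $2\le\dim\ker\mathcal Q_c\le 4$, i.e.\ $4\le\dim\ker\mathcal Q\le 8$, not the claimed equality.

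The paper closes exactly this gap by a short observation that your reduction does not exploit: rewriting the kernel equation in the auxiliary variable $h$ with $\bigl(\tfrac{d^2}{dx^2}-1\bigr)h=-2\kappa f$, so that the nonlocal operator $P$ composed with $\tfrac{d^2}{dx^2}-1$ becomes the identity, one obtains a genuine fourth-order linear ODE for $h$ whose solution space contains the two directions spanned by $h=\sinh x$ and $h=\cosh x$ (the kernel of the passage from $h$ back to $f$), and these are manifestly non-periodic; hence at most a two-dimensional space of solutions descends to $\R/2\pi b\Z$, and the M\"obius variations show the bound is attained. If you wish to keep your route, you must actually carry out one of your proposed arguments --- for instance make the symmetric/antisymmetric reduction and the Sturm comparison precise for your $(p,w)$-system --- which is substantially more work than the paper's direct identification of the two non-periodic directions; until then the upper bound, and with it the lemma, remains unproven in your write-up.
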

\begin{proof}
Without loss of generality we restrict to the space of functions
\[\{\varphi = f(x) \cos(y)+g(x) \sin(y)\mid f,g\in W^{4,2}(\R/2\pi b) \}.\]
Define
$$\mathcal Q^1:=  -\left ( \tfrac{1}{8}\kappa^2 +  \tfrac{1}{2}\right )+ \tfrac{1}{4} -\tfrac{1}{2} \del^2_{xx} - \beta^b \left( -\tfrac{1}{2}  + K\right). $$

For $\varphi=f(x) \cos(y)+g(x) \sin(y)$
$$ \varphi \in  \text{Ker}\mathcal Q$$ is equivalent to
$$f, g \in \text{Ker} \mathcal Q^1,$$ and it remains to show that $\text{Ker}\mathcal Q^1$ is 2-dimensional. We consider the equation
\[\mathcal Q^1(f)=0, \quad \text{with}\quad  f=f(x) \in W^{4,2}(\R/2\pi b). \]

Since $\kappa >0$ for orbit-like curves, we can divide by $\kappa$ and consider the equation
\[0=\frac{1}{\kappa}\left ( -\tfrac{1}{8}\kappa^2 -  \tfrac{1}{4} +\tfrac{1}{2} \del^2_{xx}-\tfrac{1}{2}\beta^b\right )f + \beta^b P(-2\kappa f),\]
with 
\[P\left( \sum_{l,\; l \cdot b \in \N} a_l \cos(lx)+b_l\sin(l x)\right)= -\sum_{l,\; l \cdot b \in \N} \tfrac{1}{1+l^2}(a_l \cos(lx)+b_l\sin(l x))\]
instead.

The operator 
\[h\in W^{4,2}(S^1)\longmapsto \left(\frac{d^2}{dx^2}-1\right) h\in W^{2,2}(S^1)\]
is injective and surjective (by the Fredholm alternative). Therefore, for $f\in $ Ker$\mathcal Q^1$ there exits a unique $h\in W^{4,2}$ such that
\[\left(\frac{d^2}{dx^2}-1\right) h=-2\kappa f.\]
Then,
\begin{equation}\label{ode4p}
\begin{split}
0=&\frac{1}{\kappa}\left ( -\tfrac{1}{8}\kappa^2 -  \tfrac{1}{4} +\tfrac{1}{2} \del^2_{xx}-\tfrac{1}{2}\beta^b\right )f + \beta^b P(-2\kappa f)\\
=&-\frac{1}{\kappa}\left( ( -\tfrac{1}{8}\kappa^2 -  \tfrac{1}{4} +\tfrac{1}{2} \del^2_{xx}-\tfrac{1}{2}\beta^b\right ) \tfrac{1}{2\kappa})\circ\left(\frac{d^2}{dx^2}-1\right) h+ \beta^b P\circ \left(\frac{d^2}{dx^2}-1\right) h.
\end{split}
\end{equation}
A short computation shows that  \[ P\circ \left(\frac{d^2}{dx^2}-1\right)= \text{id}.\]
Therefore, \eqref{ode4p} is a 4th order ordinary differential equation (on the function $h$). Thus, when ignoring periodicity,
its solution space is $4$-dimensional. Two of which are given by non-periodic solutions and spanned by
$$h = \sinh(x) \quad h = \cosh(x).$$ 

Hence, at most a 2-dimensional subspace of solution is well-defined on $\R/2\pi b$.
By considering the infinitesimal Moebius transformations we obtain that the space of global solutions is exactly 2-dimensional, proving  the lemma.
\end{proof}
\begin{proof}[Proof of Theorem 3]
Due to Lemma \ref{lneq1} and the actual form of the second variation operator it remains to consider the case $l=1.$ The space of infinitesimal M\"obius variations normal to $f^b$ is $9$-dimensional for $b > b_0,$ consisting of a 3-dimensional subspace 
of 0 Fourier mode in $y$, and a 6-dimensional subspace of Fourier mode 1 in $y.$
This 6-dimensional space corresponds to the non-positive directions of $\mathcal Q$ as can be seen as follows: For $b< b_0$ a straight forward computation shows that the kernel of $\mathcal Q$ is $4$ dimensional and it has $2$ negative directions, all of which giving rise to M\"obius variations. Because the kernel dimension of $\mathcal Q$ remains $4$-dimensional for all $f^b$ by the previous lemma
 (and because of the spectral properties of $\mathcal Q$) the space of non-positive directions of $\mathcal Q$ remains $6$ dimensional for all $f^b,$ which corresponds exactly to the M\"obius variations.
\end{proof}


\bibliographystyle{amsplain}
\bibliography{references}

\end{document}